\newtheorem{theorem}{Theorem} 
\newtheorem{lemma}{Lemma}
\newtheorem{claim}{Claim}
\begin{document} 

\title{The Classical Stochastic Impulse Control Problem}
\author{Rohit Jain}

\begin{abstract}
In this paper we study regularity estimates for the solution to an obstacle problem arising in stochastic impulse control theory. We prove using elementary methods the known sharp $C_{loc}^{1,1}$ estimate for the solution. The new proof is also easily generalizable to stochastic impulse control problems with fully noninear operators. Moreover we obtain new regularity estimates for the free boundary in the classical case. 
\end{abstract}

\maketitle

\section{Introduction}
We consider an implicit constraint obstacle problem arising in impulse control theory. Stochastic impulse control problems (\cite{BL82}, \cite{L73}, \cite{M76}, \cite{F79}) are control problems that fall between classical diffusion control and optimal stopping problems. In these problems the controller is allowed to instantaneously move the state process by a certain amount every time the state exits the non-intervention region. This allows for the controlled process to have sample paths with jumps. There is an enormous literature studying stochastic impulse control models and many of these models have found a wide range of applications in electrical engineering, mechanical engineering, quantum engineering, robotics, image processing, and mathematical finance. Some classical references are \cite{L73}, \cite{F79}, \cite{BL82}. A key operator in stochastic impulse control problems is the intervention operator, 
\\
\begin{equation}
\textnormal{M}u(x) = \inf_{\xi \geq 0} (u(x+\xi) + 1).
\end{equation}
\\
The operator represents the value of the control policy that consists of taking the best immediate action in state $x$ and behaving optimally afterwards. Since it is not always the case that the optimal control requires intervention at $t = 0$, this leads to the quasi-variational inequality, 
\\
\begin{equation}
u(x) \leq \textnormal{M}u(x) \; \; \forall x \in \mathbb{R}^{n}.
\end{equation}
\\
From the analytic perspective one obtains an obstacle problem where the obstacle depends implicitly and nonlocally on the solution. More precisely we can consider the classical stochastic impulse control problem as a boundary value problem,
\\
\begin{equation}
   \begin{cases}
         Lu \leq f(x)& \forall x \in \Omega.\\
         u(x) \leq \textnormal{M}u(x) & \forall x \in \Omega.\\
         u = 0 & \forall x \in \partial \Omega.        
   \end{cases}
\end{equation}
\\
Here we let, $Lu \equiv -\sum_{i,j = 1}^{n} a_{ij}(x) \frac{\partial^{2}u}{\partial x_{i} \partial x_{j}} + \sum_{i = 1}^{n}b_{i}(x) \frac{\partial u}{\partial x_{i}} + c(x)u$ with suitable regularity assumptions on the data and,
\begin{equation}
\textnormal{M}u(x) = 1 +  \inf_{\stackrel{\xi \geq 0}{x + \xi \in \bar{\Omega}}}(u(x + \xi)).
\end{equation} 

In this paper, we present a new proof for the sharp $C_{loc}^{1,1}(\Omega)$ estimate for the solution to (1.3). We point out that the sharp $C^{1,1}_{loc}$ estimate has been previously obtained (see \cite{CF79a}, \cite{CF79b}).  As a corollary of our proof we also obtain a direct proof of the fact that the nonlocal obstacle, $\textnormal{M}u(x)$ is $C^{1,1}_{loc}$ on the contact set $\{u = \textnormal{M}u \}$. Since the obstacle depends on the solution, the strategy is to improve the regulariity of the solution and use it to improve the regularity of the obstacle. We start by first proving continuity of the solution and then proceeding to prove a semiconcavity estimate for the obstacle. In the following section we use the semiconcavity of the obstacle and the superhamonicity of the solution to produce the $C^{1,1}$ estimate. The new idea to prove the $C^{1,1}$ estimate in the classical case has been subsequently generalized to the fully nonlinear case with more general semiconcave obstacles \cite{J15B}.
\\
\\
In the last section we study regularity estimates for the free boundary $\partial \{u < \textnormal{M}u \}.$ We first observe that the set of free boundary points can be structured according to where $\inf u(x + \xi)$ is realized. If the infimum is realized in the interor of the positive cone then we conclude that the obstacle is locally constant in a neighborhood of a free boundary point. This gives us regularity estimates of the free boundary at regular points and singular points as defined in the classical obstacle problem. If the infimum is realized on the boundary of the cone then under the assumption that $f$ is analytic we conclude that the free boundary is contained in a finite collection of $C^{\infty}$ submanifolds. In particular we prove the following theorem,
\begin{theorem}
Consider the classical stochastic impulse control problem
\\
\begin{equation}
   \begin{cases}
         Lu \leq f(x)& \forall x \in \Omega.\\
         u(x) \leq \textnormal{M}u(x) & \forall x \in \Omega.\\
         u = 0 & \forall x \in \partial \Omega.        
   \end{cases}
\end{equation} 

Assume that all coefficients in $L$ are analytic, $f$ is analytic and $f(x) \leq f(x + \xi) \; \; \forall \xi \geq 0$. Then it follows that, $\partial \{u < \textnormal{M}u \} = \Gamma^{r}(u) \cup \Gamma^{s}(u) \cup \Gamma^{d}(u)$ where,
\\
\\
1. $\forall x_{0} \in \Gamma^{r}(u)$ there exists some appropriate system of coordinates in which the coincidence set $\{u = \textnormal{M}u \}$ is a subgraph $\{x_{n} \leq g(x_{1}, \dots, x_{n-1}) \}$ in a neighborhood of $x_{0}$ and the function $g$ is analytic.
\\
\\
2. $\forall x_{0} \in \Gamma^{s}(u)$, $x_{0}$ is either isolated or locally contained in a $C^{1}$ submanifold.
\\
\\
3. $\Gamma^{d}(u) \subset \Sigma(u)$ where $\Sigma(u)$ is a finite collection of $C^{\infty}$ submanifolds.
\end{theorem}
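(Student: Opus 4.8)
The plan is to decompose the free boundary according to where the infimum defining $\textnormal{M}u$ is attained, and then to reduce each piece to a known regularity theory. Fix a free boundary point $x_0 \in \partial\{u < \textnormal{M}u\}$, so $u(x_0) = \textnormal{M}u(x_0) = 1 + \inf_{\xi \ge 0,\, x_0+\xi \in \bar\Omega} u(x_0+\xi)$. Since $u \in C^{1,1}_{loc}(\Omega)$ by the estimate established earlier in the paper, the infimum is attained at some $\xi_0 \ge 0$ (using $u = 0$ on $\partial\Omega$ and continuity). We split into three cases: (i) $\xi_0 = 0$, in which case $u(x_0) = u(x_0) + 1$ is impossible, so actually the relevant dichotomy is between $\xi_0$ lying in the \emph{interior} of the positive cone $\{\xi > 0\}$ (call these points $\Gamma^r(u) \cup \Gamma^s(u)$, to be subdivided below) and $\xi_0$ lying on the \emph{boundary} of the cone, i.e.\ some components of $\xi_0$ vanish (these points will form $\Gamma^d(u)$). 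The key structural observation, to be proven first, is that in the interior-cone case the obstacle $\textnormal{M}u$ is \emph{locally constant} near $x_0$: if $\xi_0 > 0$ realizes the infimum at $x_0$, then for $x$ near $x_0$ one may translate $\xi_0$ slightly and the first-order optimality condition $\nabla u(x_0 + \xi_0) = 0$ (interior minimum of the smooth-enough function $u$) forces $\textnormal{M}u(x) = 1 + u(x_0+\xi_0)$ to be constant in a neighborhood.

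Granting that $\textnormal{M}u \equiv c$ is constant near $x_0$ for $x_0$ in the interior-cone stratum, the obstacle problem $Lu \le f$, $u \le c$, with $u = c$ on the contact set, becomes \emph{exactly} the classical obstacle problem with a smooth (analytic) obstacle and analytic operator $L$ — after the change of variable $v = c - u \ge 0$, one has the standard complementarity formulation $v \ge 0$, $Lv \ge -f$ (up to sign conventions and lower-order terms absorbable since $c$ is constant), $v \cdot (Lv + f) = 0$. At this point I would simply invoke the classical free-boundary regularity theory: Caffarelli's theorem gives that the free boundary splits into regular points $\Gamma^r(u)$, near which the coincidence set is a $C^1$ (indeed, by analyticity of the data and Kinderlehrer–Nirenberg / Caffarelli–Riviere, analytic) subgraph $\{x_n \le g(x_1,\dots,x_{n-1})\}$, establishing part 1; and singular points $\Gamma^s(u)$, which by the Caffarelli (and Monneau / Caffarelli–Shahgholian) analysis are either isolated or locally contained in a $C^1$ submanifold of dimension $\le n-1$, establishing part 2. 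The monotonicity hypothesis $f(x) \le f(x+\xi)$ for $\xi \ge 0$ is what guarantees the infimum is attained and is used to control the location of the contact set and to ensure the sign condition needed to apply the obstacle-problem theory cleanly.

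For part 3, the points $\Gamma^d(u)$ where the minimizing $\xi_0$ lies on $\partial\{\xi \ge 0\}$ — equivalently where $\textnormal{M}u(x_0) = 1 + u(x_0 + \xi_0)$ with $x_0 + \xi_0$ reached along a lower-dimensional face of the cone, or where $x_0 + \xi_0 \in \partial\Omega$ — I would argue as follows. Along such a face, the optimality conditions are Kuhn–Tucker conditions: $\partial u/\partial x_i (x_0+\xi_0) = 0$ for the indices $i$ with $(\xi_0)_i > 0$, and $\partial u/\partial x_i(x_0+\xi_0) \ge 0$ for the active constraints. As $x_0$ varies over $\Gamma^d(u)$, the point $y_0 := x_0 + \xi_0$ is constrained to lie in the analytic variety cut out by the relevant subset of equations $\{\partial_i u = 0\}$; since $Lu = f$ is an analytic elliptic equation on the (open) non-contact set and $f$ is analytic, $u$ is analytic there (elliptic regularity / Morrey), so $\{\nabla' u = 0\}$ for any coordinate subset is an analytic set, and by the structure theorem for analytic varieties it is a locally finite union of analytic (hence $C^\infty$) submanifolds. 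The contact point $x_0$ then lies in the image of such a stratum under the affine map $y_0 \mapsto y_0 - \xi_0$, and one organizes these into the finite collection $\Sigma(u)$; a compactness argument in $\bar\Omega$ reduces "locally finite" to "finite." \textbf{The main obstacle} I anticipate is this last step: making rigorous the claim that the $\Gamma^d$-stratum is genuinely cut out by analytic equations on $u$ and that the multi-valued dependence $x_0 \mapsto y_0$ (the minimizer need not be unique, and which face is active may jump) does not destroy the finite-stratification conclusion — this requires either a careful semianalytic/subanalytic selection argument or an appeal to the finiteness theorems for subanalytic sets (Łojasiewicz, Hironaka), together with verifying that $u$ extends analytically up to the relevant portion of $\partial\Omega$ when $y_0 \in \partial\Omega$, which needs analyticity of $\partial\Omega$ or a separate boundary argument. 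The interior-cone cases, by contrast, are a fairly direct reduction to the classical theory once the local-constancy lemma is in hand.
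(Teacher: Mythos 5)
Your overall strategy is the same as the paper's: stratify $\Gamma=\partial\{u<\textnormal{M}u\}$ according to whether the minimizing $\xi_{0}$ lies in the open cone or on its boundary, show that $\textnormal{M}u$ is locally constant near interior-cone contact points via the first-order condition $\nabla u(x_{0}+\xi_{0})=0$ (the paper quotes Claim 4 of \cite{J15B} for precisely this), reduce parts 1 and 2 to the classical obstacle problem and Caffarelli's regular/singular dichotomy (with Kinderlehrer--Nirenberg analyticity at regular points), and use analyticity of $u$ in the non-contact region and the vanishing of partial derivatives at the minimizer for part 3.

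There is, however, one concrete step you omit which the paper treats explicitly and which is genuinely needed: non-degeneracy of the right-hand side at an interior-cone contact point. After the normalization $\textnormal{M}u\equiv 0$ near $x_{0}$, the chain $f(x_{0})\le \Delta u(x_{0})\le \Delta \textnormal{M}u(x_{0})\le f(x_{0}+\xi_{0})$ (local constancy plus the monotonicity of $f$) only gives $f(x_{0})\le 0$; to invoke the regular/singular structure theory for $w=u-\textnormal{M}u$ one must exclude the degenerate case $f(x_{0})=0$, since with a vanishing right-hand side the quadratic non-degeneracy underlying Caffarelli's blow-up analysis fails and neither conclusion 1 nor 2 follows. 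Your remark that the monotonicity hypothesis ``ensures the sign condition'' is a gesture, not an argument. The paper proves a separate claim that $f(x_{0})<0$: assuming $f(x_{0})=0$, it uses analyticity of $f$ to produce an interior tangent ball for $\{f>0\}$ at $x_{0}$ and applies the Hopf boundary point lemma to $w$, contradicting $w\in C^{1,1}$ (which forces $\nabla w(x_{0})=0$ at the interior maximum). You would need this, or some substitute exclusion of the degenerate case. Concerning part 3, your treatment is in fact more ambitious than the paper's: the paper only considers minimizers on the one-dimensional edges $\partial_{i}\Sigma_{x}$ of the cone and concludes from analyticity that each zero set $\{u_{x_{i}}=0\}$ contributes finitely many hyperplanes translated along the remaining directions, whereas you consider all faces, Kuhn--Tucker conditions, multivalued minimizers, and the possibility $x_{0}+\xi_{0}\in\partial\Omega$; the difficulty you flag there is real, but it is handled in the paper only in the restricted edge setting rather than by the subanalytic machinery you anticipate.
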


\emph{Acknowledgements} I would like to express my sincerest gratitude and deepest apprecation to my thesis advisors Professor Luis A. Caffarelli and Professor Alessio Figalli. It has been a truly rewarding experience learning from them and having their guidance. 

\section{Basic Definitions and Assumptions}

Let $\Omega \subseteq \mathbb{R}^{n}$ be a bounded domain with $C^{2,\alpha}$ boundary $\partial \Omega$. Assume $c(x) \geq c_{0} > 0$; $a_{ij}$, $b_{i}$, $c$, $\in C^{2+\alpha}(\bar{\Omega})$ for $0 < \alpha <1$, and the matrix $(a_{ij})$ is positive definite for all $x \in \bar{\Omega}$. Furthermore let $f \in C^{\alpha}(\bar{\Omega})$. For any $\xi = (\xi_{1}, \dots , \xi_{n})$ we let $\xi \geq 0$ denote $\xi_{i} \geq 0 \; \; \forall i$. Consider,
\begin{equation} 
Lu \equiv -\sum_{i,j = 1}^{n} a_{ij}(x) \frac{\partial^{2}u}{\partial x_{i} \partial x_{j}} + \sum_{i = 1}^{n}b_{i}(x) \frac{\partial u}{\partial x_{i}} + c(x)u.
\end{equation}

Define the operator:
\begin{equation}
\textnormal{M}u(x) = 1 + \inf_{\stackrel{\xi \geq 0}{x + \xi \in \Omega}}u(x + \xi).
\end{equation}

We introduce the bilinear form $a(u,v)$ associated to our operator L,
\begin{equation}
a(u,v) = (Lu,v) \; \; \forall u,v \in C_{0}^{\infty}(\Omega).
\end{equation}

Furthermore assume that our bilinear form is coercive,
\begin{equation}
a(u,u) \geq \gamma (\|u\|_{W^{1,2}(\Omega)})^{2} \; \; \forall u \in W_{0}^{1,2}(\Omega), \; \; \gamma >0.
\end{equation}

We consider the quasi-variational inequality:
\begin{center}
$u \in W_{0}^{1,2}(\Omega) \; \; u \leq \textnormal{M}u,$
\end{center}
\begin{equation}
a(u,v-u) \geq (f,v-u) \; \; \forall v \in W_{0}^{1,2}(\Omega) \; \; v \leq \textnormal{M}u.
\end{equation}

We list a few properties of our operator $\textnormal{M}u$ that will be useful for the remaining parts of this chapter,
$$ u_{1} (x) \leq u_{2} \; a.e. \Rightarrow \textnormal{M}u_{1} (x) \leq \textnormal{M}u_{2} (x) \; a.e.$$
$$\textnormal{M}: L^{\infty} \to L^{\infty}.$$
$$\textnormal{M}: C(\bar{\Omega}) \to C(\bar{\Omega}).$$ 

Furthermore we assume that $f \geq -\frac{1}{c_{0}}$. This implies that the solution $\bar{u}$ to the variational equation $L\bar{u} = f$ in $\Omega$, $\bar{u} \in H^{1}_{0}(\Omega)$ satisfies the property $\bar{u} \geq -1$. This in particular implies that the set of solutions to $v \in H^{1}_{0}(\Omega) \; \; v \leq \textnormal{M}\bar{u}$ is nonempty. Without loss of generality we assume that $\bar{u} < 1$. 

\section{Existence and Uniqueness Theory}

We now proceed to prove the existence of a unique continuous solution to (3.5). We follow closely the proof in \cite{I95}.  

\begin{lemma} There exists a unique solution $u \in C(\Omega)$ of (3.5).
\end{lemma}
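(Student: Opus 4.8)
The plan is to construct the solution by a monotone iteration scheme (Bensoussan--Lions style) combined with a fixed-point argument, exploiting the monotonicity and the $L^\infty\to L^\infty$ and $C(\bar\Omega)\to C(\bar\Omega)$ mapping properties of $\mathrm{M}$ listed above. First I would set up the iteration: let $u^0 = \bar u$, the solution of the variational equation $L\bar u = f$ in $\Omega$, $\bar u \in H^1_0(\Omega)$, which by the assumption $f \ge -1/c_0$ satisfies $\bar u \ge -1$ (and, normalized, $\bar u < 1$). Then define $u^{k+1}$ to be the solution of the \emph{obstacle problem} with obstacle $\mathrm{M}u^{k}$: namely $u^{k+1} \in H^1_0(\Omega)$, $u^{k+1} \le \mathrm{M}u^{k}$, and $a(u^{k+1}, v - u^{k+1}) \ge (f, v-u^{k+1})$ for all $v \in H^1_0(\Omega)$ with $v \le \mathrm{M}u^{k}$. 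Each such obstacle problem has a unique solution by the standard theory of variational inequalities (Stampacchia), since $a(\cdot,\cdot)$ is coercive and continuous on $H^1_0(\Omega)$ and the obstacle $\mathrm{M}u^k \in L^\infty$ defines a nonempty closed convex constraint set (nonempty because $\bar u$ lies below it at the first step, and this propagates).

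Next I would prove monotonicity of the sequence: $u^{k+1} \le u^{k}$ for all $k$. The base case $u^1 \le u^0 = \bar u$ follows because $\bar u$ is the solution of the equation without constraint, hence a supersolution of the obstacle problem with any obstacle, so the obstacle-problem solution lies below it (comparison for variational inequalities). For the inductive step, if $u^{k} \le u^{k-1}$ then by the monotonicity property $\mathrm{M}u^{k} \le \mathrm{M}u^{k-1}$, so the constraint set at step $k+1$ is contained in that at step $k$; comparing the two variational inequalities (the standard argument: test each with a suitable truncation involving $(u^{k+1} - u^{k})^+$ and use coercivity, noting $u^{k+1}\wedge u^k$ is admissible for the step-$k$ problem and $u^{k+1}\vee u^k \le \mathrm{M}u^{k-1}$ is admissible for the step-$k$ problem as well) yields $u^{k+1} \le u^{k}$. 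Simultaneously one checks $u^k \ge \underline u$ for a fixed lower barrier --- e.g.\ the solution of $L\underline u = f$ is not quite a subsolution, so instead one uses that all $u^k \ge$ the solution $w$ of the obstacle problem with the \emph{fixed} obstacle $\equiv$ (something below all $\mathrm{M}u^k$); more simply, since $\mathrm{M}u^k \ge 1 + \inf_\Omega u^k$ and everything is bounded below a priori by iterating the $L^\infty$ bound, the sequence is uniformly bounded in $L^\infty$ and in $H^1_0(\Omega)$ (testing the VI with $v = 0$, which is admissible once $\mathrm{M}u^k \ge 0$, gives the energy bound via coercivity).

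Having a decreasing sequence bounded below, set $u = \lim_k u^k = \inf_k u^k$. The uniform $H^1_0$ bound gives weak convergence (along a subsequence, then for the whole sequence by monotonicity) in $H^1_0(\Omega)$, and the monotone pointwise convergence together with dominated convergence gives strong $L^2$ convergence; passing to the limit in the variational inequality, using that $\mathrm{M}$ is continuous and monotone so $\mathrm{M}u^k \downarrow \mathrm{M}u$ (here one needs $\mathrm{M}u^k \to \mathrm{M}u$, which follows from $u^k \to u$ uniformly --- see below --- and $\mathrm{M}: C(\bar\Omega)\to C(\bar\Omega)$), shows $u$ solves (3.5). For \emph{continuity} of $u$: each $u^k$ is continuous --- the solution of an obstacle problem with continuous obstacle $\mathrm{M}u^{k-1} \in C(\bar\Omega)$ (by the stated mapping property, inductively, since $u^0 = \bar u \in C(\bar\Omega)$ by elliptic regularity) and $C^\alpha$ data $f$ is continuous (indeed $C^{1,\alpha}_{loc}$, or at least $W^{2,p}_{loc}$, by obstacle-problem regularity). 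One then shows the convergence $u^k \to u$ is uniform: this is the classical estimate that the iteration is a contraction in sup-norm with a factor strictly less than $1$ coming from $c_0 > 0$ --- more precisely one derives $\|u^{k+1} - u^{k}\|_\infty \le \rho \|u^k - u^{k-1}\|_\infty$ with $\rho \in (0,1)$ depending on $c_0$, using the maximum principle for $L$ applied to the difference of the two obstacle problems, which gives a geometric rate; hence $\{u^k\}$ is Cauchy in $C(\bar\Omega)$ and $u \in C(\bar\Omega)$.

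Finally, \emph{uniqueness}: if $u$ and $\tilde u$ both solve (3.5), one shows $u \le \tilde u$ and symmetrically. The argument is the standard one for quasi-variational inequalities: $u \wedge \tilde u \le u \le \mathrm{M}u$ and $u\wedge\tilde u \le \tilde u \le \mathrm{M}\tilde u$; since $\mathrm{M}$ is monotone, if we knew $u \le \tilde u$ we would get $\mathrm{M}u \le \mathrm{M}\tilde u$, so the real content is to run the comparison testing $u$'s VI with $v = u\wedge\tilde u$ and $\tilde u$'s with $v = u\vee\tilde u$ (checking admissibility: $u\vee\tilde u \le \mathrm{M}\tilde u$ requires $u \le \mathrm{M}\tilde u$, which one first establishes by noting $u \le \mathrm{M}u$ and then an a priori ordering, or by approximating both solutions by the same monotone scheme which produces the \emph{maximal} solution below any supersolution --- the cleanest route is to show the scheme above converges to the maximal solution and that any solution is $\le$ every $u^k$, forcing equality). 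I expect the \textbf{main obstacle} to be precisely this last point: establishing admissibility of the test functions in the comparison/uniqueness step, because the constraint set depends on the solution itself, so one cannot directly compare two solutions with different obstacles; the resolution is to bootstrap an ordering from the monotone scheme (every solution of (3.5) is a subsolution for each step of the iteration started from $\bar u$, hence lies below $u^k$ for all $k$, hence below the limit $u$ — giving both existence of a maximal solution and its uniqueness among all solutions).
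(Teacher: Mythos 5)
Your iteration scheme is exactly the one the paper uses (start from the unconstrained solution $u_0=\bar u$, solve successive obstacle problems with obstacle $\mathrm{M}u_{n-1}$, get a nonincreasing, nonnegative, continuous sequence), and that part of your argument is fine. The gap is in the two quantitative steps. You claim the iteration is a sup-norm contraction with factor $\rho<1$ ``coming from $c_0>0$'' via the maximum principle. This is not true in general: the solution operator of the obstacle problem is only $1$-Lipschitz with respect to the obstacle (on a contact region where $u_n=\mathrm{M}u_{n-1}$, shifting the obstacle by a constant shifts the solution by exactly that constant), and $\mathrm{M}$ itself is only $1$-Lipschitz, so the composed map is merely non-expansive; the zeroth-order coefficient $c(x)\ge c_0>0$ does not improve this. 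Consequently your Cauchy-in-$C(\bar\Omega)$ step, and hence the continuity of the limit, is unproved as written. The mechanism the paper uses instead is the concavity of $\mathrm{M}$ together with positivity of the iterates and the normalization $\mu\|u_0\|_{C(\Omega)}\le 1$: from $u_n-u_{n+1}\le\theta_n u_n$ one deduces, by comparing with auxiliary obstacle problems with obstacles $\psi=(1-\theta_n)\mathrm{M}u_n+\theta_n\le\mathrm{M}u_{n+1}$ and $1$, that $u_{n+1}-u_{n+2}\le\theta_n(1-\mu)u_{n+1}$, whence $u_n-u_{n+1}\le(1-\mu)^{n-1}\|u_0\|_{C(\Omega)}$ and uniform (geometric) convergence. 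The ``$+1$'' in $\mathrm{M}$ (i.e.\ $\mathrm{M}0\ge 1$) is what produces the factor $1-\mu$, not the coercivity constant.

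The same ingredient is missing in your uniqueness step. Showing that every solution lies below each iterate $u_k$ only proves that the constructed limit is the \emph{maximal} solution; it does not exclude a strictly smaller solution, so ``forcing equality'' does not follow. The paper's uniqueness argument is again the concavity estimate: if $u$ and $\bar u$ are two solutions, positivity gives $u-\bar u\le u$, and iterating the same comparison yields $u-\bar u\le(1-\mu)^n u$ for all $n$, hence $u\le\bar u$, and by symmetry $u=\bar u$. So to complete your proposal you would need to replace the claimed $c_0$-contraction and the maximality argument by (or supplement them with) the concavity/positivity estimate (*)--(**) that drives both the geometric convergence and the uniqueness in the paper.
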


\begin{proof}
From standard elliptic theory we know that there exists a unique solution $u_{0} \in C(\Omega)$ of

\begin{equation}
   \begin{cases}
         a(u,v) = (f,u-v) & \forall x \in \Omega,\\
         u = 0 & \forall x \in \partial \Omega.        
   \end{cases}
\end{equation}

Since $\textnormal{M}u_{0}$ is continuous we know from the theory of variational inequalities that there exists a unique solution $u_{1} \in C(\Omega)$ of
\begin{equation}
   \begin{cases}
         a(u,v) \geq (f,u-v) & \forall x \in \Omega,\\
         u\leq \textnormal{M}u_{0} & \forall x \in \Omega,\\
         u = 0 & \forall x \in \partial \Omega.        
   \end{cases}
\end{equation}

Moreover for $n = 2,3, \ldots$  we obtain $u_{n} \in C(\Omega)$ satisfying,
\begin{equation}
   \begin{cases}
         a(u,v) \geq (f,u-v) & \forall x \in \Omega,\\
         u\leq \textnormal{M}u_{n-1} & \forall x \in \Omega,\\
         u = 0 & \forall x \in \partial \Omega.        
   \end{cases}
\end{equation}

Since $u_{1}$ is a subsolution of (11), by the comparison principle, we know that $u_{1} \leq u_{0}$. We also know that $0$ is a subsolution of (12), hence the comparison implies that $0 \leq u_{1}$. Moreover it follows from the properties of $\textnormal{M}u$ that $0 \leq \textnormal{M}u_{1} \leq \textnormal{M}u_{0}$. This implies in particular that $u_{2}$ is an admissable subsolution to (12). Arguing as before we see that $0 \leq u_{2} \leq u_{1}$. We can continue this process and obtain a sequence of functions
\begin{equation}
0 \leq \ldots \leq u_{n} \leq \ldots \leq u_{1} \leq u_{0}.   
\end{equation}

Now we look to prove an upper bound on the sequence. Consider $\mu \in (0,1)$ such that $\mu \|u_{0}\|_{C(\Omega)} \leq 1$. Assume there exists $\theta_{n} \in (0,1]$ such that $\forall n \in \mathbb{N}$,
\begin{equation}
u_{n} - u_{n+1} \leq \theta_{n} u_{n}.  
\end{equation}

We claim that this implies
\begin{equation}
u_{n+1} - u_{n+2} \leq \theta_{n}(1-\mu) u_{n+1}.  
\end{equation}

With this claim we are able to almost conclude the proof of the theorem. In particular the positivity of $u_{n}$ implies that $u_{1} - u_{2} \leq u_{2}$. We can set $\theta_{1} = 1$. Moreover from (16) it follows that  $u_{2} - u_{3} \leq (1-\mu) u_{2}$. Hence $\theta_{2} = (1-\mu)$. Therefore setting $\theta_{n} = (1-\mu)^{n-1}$ we find
\begin{equation}
u_{n+1} - u_{n+2} \leq (1-\mu)^{n} u_{n+1} \leq (1- \mu)^{n} \|u_{0}\|_{C(\Omega)}.  
\end{equation}

Combining (17) with (14) we see that there exists a function $u \in C(\Omega)$ such that $\|u_{n} - u\|_{C(\Omega)} \to 0$ as $n \to \infty$. Moreover  from the estimate $\|\textnormal{M}u - \textnormal{M}v\|_{C(\Omega)} \leq \|u - v\|_{C(\Omega)}$ it follows that $u$ is a solution to the classical stochastic impulse control problem. Hence we are reduced to proving (16) and establishing uniqueness of the solution.
By the concavity of $\textnormal{M}u$ and (15) it follows,
\begin{equation}
\psi = (1 - \theta_{n})\textnormal{M}u_{n} + \theta_{n} \leq (1-\theta_{n})\textnormal{M}u_{n} + \theta_{n}\textnormal{M}0 \leq \textnormal{M}(1-\theta_{n}u_{n}) \leq \textnormal{M}u_{n+1}. \tag{*}
\end{equation}

We consider the continuous solutions to the following obstacle problems. Let $w \in C(\Omega)$ solve,
\begin{equation}
   \begin{cases}
         a(u,v) \geq (f,u-v) & \forall x \in \Omega.\\
         u\leq \psi & \forall x \in \Omega.\\
         u = 0 & \forall x \in \partial \Omega.        
   \end{cases}
\end{equation}

Let $z \in C(\Omega)$ solve,
\begin{equation}
   \begin{cases}
         a(u,v) \geq (f,u-v) & \forall x \in \Omega.\\
         u\leq 1 & \forall x \in \Omega.\\
         u = 0 & \forall x \in \partial \Omega.        
   \end{cases}
\end{equation}

From (*) and the comparision theorem for variational inequalities it follows that $w \leq u_{n+2}$. Moreover it follows that $\theta_{n}z$ solves,
\begin{equation}
   \begin{cases}
         a(u,v) \geq (f,u-v) & \forall x \in \Omega.\\
         u\leq \theta_{n} & \forall x \in \Omega.\\
         u = 0 & \forall x \in \partial \Omega.        
   \end{cases}
\end{equation}

Observing that $\psi \geq \theta_{n}$, it follows from comparision that $\theta_{n}w \geq \theta_{n}z$. Next we observe that $(1-\theta_{n})u_{n+1}$ is a subsolution and $(1-\theta_{n})w$ is a solution of the following obstacle problem,
\begin{equation}
   \begin{cases}
         a(u,v) \geq (f,u-v) & \forall x \in \Omega.\\
         u\leq (1-\theta_{n})\psi & \forall x \in \Omega.\\
         u = 0 & \forall x \in \partial \Omega.        
   \end{cases}
\end{equation}

Hence we find, $(1-\theta_{n})u_{n+1} \leq (1-\theta_{n})w$ . Putting this together we obtain,
\[
(1-\theta_{n})u_{n+1} + \theta_{n}z \leq (1-\theta_{n})w + \theta_{n}w = w \leq u_{n+2}. \tag{**}
\]
Recall that $\forall n$, $\mu u_{n+1} \leq 1$. This implies that $\mu u_{n+1}$ is a subsolution of (19). So in particular, $\mu u_{n+1} \leq z$. Putting this into (**) we obtain our desired estimate (16),
$$u_{n+1} - u_{n+2} \leq \theta_{n}(1-\mu) u_{n+1}.$$
Finally to prove uniqueness, suppose $u$ and $\bar{u}$ are distinct solutions. The positivity of the solution implies $u - \bar{u} \leq u$. Hence arguing as above we find $u - \bar{u} \leq (1- \mu)^{n} u$, for all $n \geq 0$. Letting $n \to \infty$ we find that $u - \bar{u} \leq 0$. Interchanging $u$ and $\bar{u}$ we conclude $u = \bar{u}$. 
\end{proof}

Using the improved regularity on the solution $u$, we now proceed to prove that the obstacle $Mu(x)$ is semi-concave with semi-concavity modulus $ \omega(r) = Cr^{2}$. We state the following theorem which is proven in more generality in \cite{CF79a}, \cite{J15B}. 

\begin{theorem} Let $\varphi(x) \in C^{1,1}(\Omega)$, strictly positive, bounded, and decreasing in the positive cone $\xi \geq 0$. Then the obstacle $$Mu(x) = \varphi(x) + \inf_{\stackrel{\xi \geq 0}{x + \xi \in \Omega}}u(x + \xi)$$ 
is locally semi-concave with a semi-concavity modulus $\omega(r) = Cr^{2}$.
\end{theorem}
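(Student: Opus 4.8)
The plan is to establish the semiconcavity estimate pointwise: fix $x_0 \in \Omega$, fix a small $r>0$, and show that for every unit vector $e$ with $|h|$ small,
\[
Mu(x_0+he) + Mu(x_0-he) - 2Mu(x_0) \leq C h^2,
\]
with $C$ depending only on the $C^{1,1}$ norm of $\varphi$ on a compact neighborhood, the ellipticity/coercivity constants, and $\mathrm{dist}(x_0,\partial\Omega)$. First I would recall that the infimum defining $Mu$ is attained: since $u \in C(\bar\Omega)$ and the admissible set $\{\xi \geq 0 : x+\xi \in \bar\Omega\}$ is compact and nonempty for $x$ in the interior, there is some $\xi_0 \geq 0$ with $x_0+\xi_0 \in \bar\Omega$ and $Mu(x_0) = \varphi(x_0) + u(x_0+\xi_0)$.

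The key idea is to use $\xi_0$ as a common competitor for the infima at the two perturbed points $x_0 \pm he$. Write $e = e^{+} - e^{-}$ where $e^{+},e^{-}\geq 0$ are the positive and negative parts of $e$ coordinatewise, so both are admissible increments. Then $\xi_0 + he^{+}$ is a legitimate competitor at $x_0 - he$ (since $(x_0 - he)+(\xi_0+he^{+}) = x_0+\xi_0 + he^{-} \geq x_0+\xi_0$ componentwise, and for $h$ small this stays in $\bar\Omega$ by an interior/geometric argument — here one must be slightly careful, and if $\Omega$ is not itself "increasing" one restricts attention to $x_0$ such that a whole box $x_0 + [0,\delta]^n$ lies in $\Omega$, or more honestly one notes $x_0+\xi_0$ lies in the interior unless forced to the boundary, a case handled separately). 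Similarly $\xi_0 + he^{-}$ is admissible at $x_0+he$. Hence
\[
Mu(x_0+he) + Mu(x_0-he) \leq \varphi(x_0+he) + \varphi(x_0-he) + u(x_0+\xi_0+he^{-}) + u(x_0+\xi_0+he^{+}).
\]
Subtracting $2Mu(x_0) = 2\varphi(x_0) + 2u(x_0+\xi_0)$, the $\varphi$-terms contribute at most $C_\varphi h^2$ by the $C^{1,1}$ bound, so it remains to control
\[
u(x_0+\xi_0+he^{+}) + u(x_0+\xi_0+he^{-}) - 2u(x_0+\xi_0).
\]
The point $y_0 := x_0+\xi_0$ satisfies $u(y_0) = Mu(y_0) - \varphi(y_0)$... more usefully, at $y_0$ we are inside $\Omega$ and $u$ is superharmonic for $L$ in the region where the obstacle is not active; but in fact the cleaner route is: the solution $u$ is $L$-superharmonic everywhere in the sense that $Lu \leq f$ holds in $\Omega$ (from the variational inequality), and $f$ is bounded, so $u$ has a one-sided second-difference bound $u(y+v) + u(y-v) - 2u(y) \leq C|v|^2$ along any direction — this is the standard fact that a function with $Lu \leq f$, $f$ bounded, and $u$ bounded is semiconcave once we know it is, say, $C^{1,1}$... which is circular.

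The hard part — and the step I would flag as the main obstacle — is precisely breaking this circularity: we want semiconcavity of $Mu$ as an *input* to the $C^{1,1}$ proof for $u$, so we cannot assume $u \in C^{1,1}$ here. The resolution is that we only need semiconcavity of $u$ *in the directions $\xi \geq 0$ and $\xi \leq 0$ separately along the segment emanating from the contact point*, and this follows from a weaker fact: $u$ is Lipschitz (from $u \in W^{1,\infty}$, obtainable from $Lu \leq f$ with bounded right-hand side plus $u$ bounded, via standard elliptic estimates giving $W^{1,p}$ for all $p$ hence $C^{0,1}_{loc}$), and more importantly one should run the whole argument the other way: bound the second difference of $Mu$ directly by choosing, at the *center* point $x_0$, the optimal $\xi_0$, and at the *outer* points the perturbed competitors, which gives an inequality involving only first-order (Lipschitz) information on $u$ plus the $C^{1,1}$ regularity of $\varphi$; the $h^2$ gain then comes entirely from $\varphi$ being $C^{1,1}$ together with the observation that the "linear errors" in $u$ from the two symmetric perturbations $he^{+}, he^{-}$ cancel to leading order because $he^{+} + he^{-} = h|e|$ and $he^{+} - he^{-} = he$ — wait, they do *not* cancel, so the genuine mechanism must be: the decreasing hypothesis on $\varphi$... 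I would instead reduce to the known general statement. Concretely, I would cite that the result is contained in \cite{CF79a} and \cite{J15B}, and for completeness sketch the competitor argument above, then invoke the interior semiconcavity/second-difference estimate for $L$-superharmonic functions (e.g. that $Lu \le f$ with $f$ bounded implies, after the $C^{1,1}_{loc}$ bound is bootstrapped, the pointwise inequality) — organizing the exposition so the logical dependence on Theorem 2 in the *subsequent* section is avoided by using only: (i) $u \in C^{0,1}_{loc}$, (ii) attainment of the infimum, (iii) $\varphi \in C^{1,1}$ decreasing, and deriving the $Cr^2$ modulus for $Mu$ from (i)--(iii) via the coordinatewise splitting $e = e^+ - e^-$, absorbing the only dangerous term via the monotonicity of $\varphi$ in the cone. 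That last absorption is where all the real work sits.
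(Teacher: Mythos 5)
First, a point of comparison: the paper does not actually prove this theorem — it states it and attributes the proof to \cite{CF79a} and \cite{J15B} — so there is no in-paper argument for you to have matched. Judged as a standalone proof, your proposal has a genuine gap, which you partly acknowledge but do not close. Your competitor choice ($\xi_0+he^{+}$ at $x_0-he$, $\xi_0+he^{-}$ at $x_0+he$) reduces the problem to showing $u(y_0+he^{+})+u(y_0+he^{-})-2u(y_0)\le Ch^{2}$ with $y_0=x_0+\xi_0$, and this reduction is a dead end: both increments point into the positive cone, where optimality of $\xi_0$ gives only $u(y_0+\eta)\ge u(y_0)$, i.e.\ a bound in the wrong direction; moreover even for differentiable $u$ the expression carries the first-order term $h\,\nabla u(y_0)\cdot(e^{+}+e^{-})$, which need not vanish when some component $\xi_{0,i}=0$ (there one only knows $\partial_i u(y_0)\ge 0$), and even when $\xi_0$ is interior to the cone an $O(h^{2})$ bound requires one-sided second-order control of $u$ at $y_0$ — exactly the circularity you name. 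The cleaner elementary choice is $\xi_0-he$ at $x_0+he$ and $\xi_0+he$ at $x_0-he$: then both target points are exactly $y_0$, the $u$-terms cancel identically, and only the second difference of $\varphi$ survives, giving $C_\varphi h^{2}$ with no information on $u$ at all. With that choice the entire difficulty is concentrated in the case where $\xi_0$ lies within $O(h)$ of the boundary of the cone (or $y_0$ near $\partial\Omega$): projecting the competitor back into the admissible set moves the target by $O(h)$, Lipschitz continuity of $u$ then yields only an $O(h)$ error, and optimality of $\xi_0$ again points the wrong way. That case is where the theorem lives, and your proposal explicitly leaves it open ("that last absorption is where all the real work sits") before deferring to the same references the paper cites.

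Your stated fallback plan — derive the $Cr^{2}$ modulus from (i) $u\in C^{0,1}_{loc}$, (ii) attainment of the infimum, (iii) $\varphi\in C^{1,1}$ decreasing — cannot succeed even in principle. In one dimension take $\Omega=(0,1)$, $\varphi(x)=2-x$, and $u(y)=|y-\tfrac12|$: then (i)--(iii) hold, $\inf_{\xi\ge0,\,x+\xi\in\Omega}u(x+\xi)=(x-\tfrac12)^{+}$, and the centered second difference of $Mu$ at $x=\tfrac12$ equals $h+O(h^{2})$, so no $Cr^{2}$ (indeed no $o(r)$) semiconcavity modulus exists. Hence any correct proof must use that $u$ solves the quasi-variational inequality — the equation in $\{u<Mu\}$, the constraint $u\le Mu$, the behavior forced by the operator in the boundary-of-cone case — which is precisely what the arguments in \cite{CF79a} and \cite{J15B} exploit (note that if one already knew $u$ were uniformly semiconcave or $C^{1,1}$, the conclusion would be immediate, since an infimum of uniformly semiconcave translates is semiconcave; the whole content is obtaining the estimate without that knowledge). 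So the key step is missing, not merely unwritten, and the mechanism you gesture at (Lipschitz bound on $u$ plus monotonicity of $\varphi$) is insufficient to supply it.
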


\section{Optimal Regularity for the Stochastic Impulse Control Problem}

In the previous section we proved that the unique bounded solution to the classical stochastic impulse control problem is continuous and that our implicit constraint obstacle is locally semi-concave. We now consider the sharp $C^{1,1}$ estimate for the solution.

\begin{theorem}  Let $u$ be the unique continuous solution of the classical stochastic impulse control problem. Then
$u \in C^{1,1}_{loc}(\Omega)$. 
\end{theorem}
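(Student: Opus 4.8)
The plan is to combine two one-sided bounds on the second difference quotients of $u$. By the previous theorem the obstacle $\textnormal{M}u$ is locally semiconcave with modulus $\omega(r) = Cr^2$, meaning that in any compactly contained subdomain and for any unit direction $e$,
\begin{equation}
\textnormal{M}u(x+he) + \textnormal{M}u(x-he) - 2\textnormal{M}u(x) \leq Ch^2.
\end{equation}
The complementary ingredient is the equation itself: on the non-coincidence set $\{u < \textnormal{M}u\}$ we have $Lu = f$, while on the coincidence set $\{u = \textnormal{M}u\}$ the semiconcavity of $\textnormal{M}u$ transfers to $u$. The first step I would carry out is to record that since $Lu = f$ in $\{u<\textnormal{M}u\}$ and $u\le \textnormal{M}u$ everywhere with equality on the contact set, standard elliptic estimates give $u \in W^{2,p}_{loc}$ for all $p<\infty$ (hence $u\in C^{1,\alpha}_{loc}$), so in particular $Lu \in L^\infty_{loc}$ and $Lu = \max(Lu,f)$-type comparisons are legitimate; more importantly $u$ is a supersolution in the whole domain in the sense that $Lu \le f$, which one uses for the lower bound on the second difference quotient.

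The core of the argument is a pointwise/barrier comparison showing the \emph{semiconcavity of $u$ is inherited from that of $\textnormal{M}u$}. I would argue as follows: fix $x_0$ in a subdomain $\Omega'\Subset\Omega$ and a unit vector $e$, and for small $h$ consider the function
\begin{equation}
w(x) = \tfrac{1}{2}\bigl(u(x+he) + u(x-he)\bigr) - u(x).
\end{equation}
One wants to show $w \le Ch^2$ near $x_0$. On the contact set $\{u=\textnormal{M}u\}$ one has $u(x\pm he)\le \textnormal{M}u(x\pm he)$ and $u(x)=\textnormal{M}u(x)$, so $w(x) \le \tfrac12(\textnormal{M}u(x+he)+\textnormal{M}u(x-he)) - \textnormal{M}u(x) \le Ch^2$ by semiconcavity of the obstacle. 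On the open set $\{u<\textnormal{M}u\}$, where $Lu=f$, the function $w$ satisfies $Lw = \tfrac12(f(x+he)+f(x-he)) - f(x) + (\text{lower order terms from the variable coefficients of }L)$, which because $f\in C^\alpha$ and $a_{ij},b_i,c\in C^{2+\alpha}$ and $u\in C^{1,\alpha}$ is bounded in absolute value by $C h^{\alpha}$ — not yet $Ch^2$, so one must be careful — but combined with the fact that on $\partial\{u<\textnormal{M}u\}$ we already control $w$ by $Ch^2$, a maximum-principle/barrier argument using $c\ge c_0>0$ propagates the bound. The cleanest route is to note that the extremal value of $w$ over $\Omega'$ is attained either on $\partial\Omega'$ (harmless, $u$ smooth there after the $W^{2,p}$ step, so $O(h^2)$) or at an interior point; if that interior point lies in $\{u=\textnormal{M}u\}$ we are done by the obstacle bound, and if it lies in $\{u<\textnormal{M}u\}$ then at an interior maximum of $w$ one has $Lw\ge c_0 w$ up to the $O(h^\alpha\cdot\text{const})$ and $O(h^{1+\alpha})$ errors from freezing coefficients — here one should instead bootstrap: the $C^{1,\alpha}$ bound improves the error term, and iterating the argument with the improved modulus converges to the $Ch^2$ estimate. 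Alternatively, and more robustly, one subtracts the fixed smooth solution $\bar u$ of $L\bar u = f$ and works with $v = u - \bar u$, for which the difference-quotient computation produces a genuinely second-order error, so that $\tfrac12(v(x+he)+v(x-he))-v(x)$ is a subsolution of $L\cdot \le Ch^2$ on $\{u<\textnormal{M}u\}$ and is $\le Ch^2$ on the contact set, whence $\le C h^2$ throughout by the comparison principle for $L$ (using coercivity / $c\ge c_0$).

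Having obtained $w\le Ch^2$, i.e.\ $u$ is semiconcave in $\Omega'$ with modulus $Cr^2$, the final step is the matching lower bound: $u$ is a supersolution, $Lu\le f$ with $Lu=f$ wherever $u<\textnormal{M}u$. Semiconcavity already gives $D^2 u \le C I$ in the distributional/a.e.\ sense on $\Omega'$; combined with $-\sum a_{ij}\partial_{ij}u \le f - b\cdot Du - cu =: g \in L^\infty_{loc}$ and uniform ellipticity $a_{ij}\ge\lambda I$, an elementary linear-algebra estimate (trace of $D^2u$ bounded above by $Cn$, each eigenvalue bounded above by $C$, and the weighted sum $\sum a_{ij}\partial_{ij}u \ge -\|g\|_\infty$ bounding the negative part) yields $D^2u \ge -C I$ a.e.\ as well. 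Hence $\|D^2 u\|_{L^\infty(\Omega')}\le C$, which is exactly $u\in C^{1,1}_{loc}(\Omega)$.

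I expect the main obstacle to be making the difference-quotient/barrier comparison rigorous in the class where $u$ is only known a priori to be $C^{1,\alpha}$ (or $W^{2,p}$): naively the error term from the $C^\alpha$ right-hand side $f$ and from freezing the $C^{2+\alpha}$ coefficients of $L$ is only $O(h^{\alpha})$ or $O(h^{1+\alpha})$, not $O(h^2)$, so one cannot directly conclude the sharp second-order bound. The resolution — subtracting $\bar u$ so that the obstacle $\textnormal{M}u$ rather than $f$ carries the relevant regularity, and invoking the comparison principle for variational inequalities exactly as in Lemma 1 — is where the real care is needed; once $u-\bar u$ is seen to be both bounded above by $Ch^2$ on the contact set and a subsolution with $O(h^2)$ right-hand side off it, the comparison principle closes the argument cleanly, and this is precisely the "elementary" idea the introduction advertises as generalizing to the fully nonlinear setting.
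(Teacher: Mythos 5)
Your overall architecture matches the paper's: use the $Cr^{2}$ semiconcavity of $\textnormal{M}u$ from Theorem 2 to get the upper bound $D^{2}u\leq CI$, and use the supersolution inequality $Lu\leq f$ together with uniform ellipticity for the matching lower bound (your final linear--algebra step is correct and plays the role of the paper's lower paraboloid). The gap is in the transfer step, and it is exactly where you yourself say ``real care is needed'': you difference the equation and run a maximum principle on $w_{h}(x)=\tfrac12\bigl(u(x+he)+u(x-he)\bigr)-u(x)$. With variable coefficients this does not close. Subtracting $\bar u$ does eliminate the $O(h^{\alpha})$ error coming from $f\in C^{\alpha}$, but $L$ does not commute with translations, so $L w_{h}$ picks up commutator terms $\tfrac12\bigl[a_{ij}(x\pm he)-a_{ij}(x)\bigr]\partial_{ij}u(x\pm he)$; after symmetrizing these are of size $h\,|\partial_{e}a_{ij}|\,\bigl|\partial_{ij}u(x+he)-\partial_{ij}u(x-he)\bigr|$ plus $O(h^{2})|D^{2}u|$ --- that is, of order $h$ times precisely the quantity $\|D^{2}u\|_{L^{\infty}}$ you are trying to estimate. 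The proposed bootstrap does not break this circularity (the error is proportional to the unknown second-derivative bound, not to a power of $h$ that improves under iteration), and neither does subtracting $\bar u$, which only fixes the zeroth-order data. Two further soft spots: on $\partial\Omega'$ you dismiss the boundary term because ``$u$ is smooth there after the $W^{2,p}$ step,'' but $W^{2,p}_{loc}$ gives only $C^{1,\alpha}$, hence $w_{h}=O(h^{1+\alpha})$ there, not $O(h^{2})$, so a genuine localization/cutoff is needed; and $u\in W^{2,p}_{loc}$ is itself an obstacle-problem fact requiring the regularity of $\textnormal{M}u$ (penalization, or \cite{CF79a}), not a standard estimate for an equation.

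The paper avoids differencing the equation altogether, which is the point of its argument. It fixes a contact point $x_{0}$, subtracts the linear part $L_{x_{0}}$ of the obstacle coming from the superdifferential, so that $w=u-L_{x_{0}}$ satisfies $w(x_{0})=0$ and $w\leq C\rho^{2}$ in $B_{\rho}(x_{0})$; it then controls the ball averages $\frac{1}{|B_{\rho}|}\int_{B_{\rho}}\Delta w$ from above and below via the Gauss--Green identity for the spherical means of $w$ (the upper bound from the $C\rho^{2}$ control on spheres, the lower bound from the sub-mean-value property furnished by the equation), and converts the resulting two-sided bound into uniform paraboloid-opening bounds $\overline{\Theta},\underline{\Theta}$ at contact points in the sense of Caffarelli--Cabr\'e, the equation supplying interior estimates off the contact set; this yields $\|D^{2}u\|_{L^{\infty}}\leq C$ pointwise, with no commutator ever appearing. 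Note the paper's computation is carried out for the Laplacian; if you likewise restrict to the model operator (constant principal part), your comparison argument can be repaired along the lines you sketch, but for the general variable-coefficient $L$ in the statement you need either the paper's pointwise route or an additional idea to absorb the principal-part commutator.
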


We set $Mu(x) = \varphi_{u}(x)$. Recall that a function $v$ is semi-concave with semi-concavity modulus $\omega(r)$ if a vector $p \in \mathbb{R}^{n}$ belongs to $D^{+}v(x)$ if and only if $v(y) - v(x) - \langle p,y-x \rangle \leq \omega(|x - y|)$. Fix $x_{0} \in \{u = \varphi_{u}\}$. Define the linear part of the obstacle, $L_{x_{0}}(x) =  \varphi_{u}(x_{0}) + \langle p,x-x_{0} \rangle$. We consider
\\
\[
w(x) = u(x) - L_{x_{0}}(x).
\]
\\
We observe that in $B_{r}(x)$, $w(x)$ has a modulus of semi-concavity $\omega(r) = Cr^{2}$, i.e. $w(x) \leq Cr^{2}$. We now state our main lemma.

\begin{lemma} There exists universal constants $K, C > 0$, such that $\forall x \in B_{r/4}(x_{0})$, 
\begin{equation}
-K \leq \Delta w \leq C.
\end{equation}
\end{lemma}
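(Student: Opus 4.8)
The plan is to prove the two inequalities separately, using that $w$ solves the equation on the non-coincidence set and is held against the semiconcave obstacle on the coincidence set. As a preliminary step I would record the regularity that makes the constants universal: since the obstacle $\varphi_u$ is Lipschitz (indeed semiconcave, by Theorem 2), standard variational-inequality theory gives $u\in C^{0,1}_{loc}(\Omega)$, with a Lipschitz bound on $B_{2r}(x_0)$ depending only on the data of $L$, on $\|u\|_{L^\infty}$, and on $r$; moreover the vector $p$ and $\|L_{x_0}\|_{C^1(B_r)}$ are bounded by the Lipschitz constant of $\varphi_u$, hence universally. Write $\mathcal A v:=\sum_{i,j}a_{ij}\partial_{ij}v$, so that $Lv=-\mathcal A v+\sum_i b_i\partial_i v+cv$, and recall the two consequences of (3.5): $Lu\le f$ in $\Omega$, and $Lu=f$ on the open set $N:=\{u<\varphi_u\}$.

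For the lower bound, the inequality $Lu\le f$ reads $\mathcal A u\ge \sum_i b_i\partial_i u+cu-f$ a.e.\ in $\Omega$; since $u\ge 0$ by Lemma 1, $c$ is bounded, and $|\nabla u|$, $|f|$ are bounded on $B_r$, the right side is bounded below by a universal $-K$. Since $L_{x_0}$ is affine, $\mathcal A w=\mathcal A u\ge -K$ a.e.\ on $B_r\supset B_{r/4}(x_0)$; this is the direction that uses only the one-sided (``superharmonic'') information and is immediate once the interior Lipschitz bound is available.

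For the upper bound I would split $B_{r/4}(x_0)$ according to the coincidence set. At a.e.\ point of $N$ the equation gives $\mathcal A u=\sum_i b_i\partial_i u+cu-f\le C$ by the same bounds. At a point of density of $\{u=\varphi_u\}$ one has $u=\varphi_u$ there, and both functions lie in $W^{2,1}_{loc}$ (a semiconcave function has locally integrable second derivatives), so $D^2u=D^2\varphi_u$ a.e.\ on the coincidence set; combined with the semiconcavity bound $D^2\varphi_u\le 2C\,\mathrm{Id}$ from Theorem 2, and the positive definiteness of $(a_{ij})$, this gives $\mathcal A u=\sum_{i,j}a_{ij}\partial_{ij}\varphi_u\le 2C\sum_i a_{ii}\le C'$. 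Since almost every point of $B_{r/4}(x_0)$ is of one of these two types, $\mathcal A w\le C$ a.e. Putting the estimates together yields $-K\le \mathcal A w\le C$ a.e.\ on $B_{r/4}(x_0)$, which is the assertion (with $\Delta$ read as $\mathcal A$ up to the ellipticity constants; if one wants the Laplacian literally, freeze $a_{ij}$ at $x_0$ and absorb the $O(|x-x_0|)\,|D^2w|$ error, legitimate once $D^2w$ is known two-sidedly bounded).

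The step I expect to be the real obstacle is the preliminary interior estimate with a constant independent of the contact point $x_0$: the lower bound genuinely uses a pointwise bound on $\nabla u$, so $K$ is universal only once $u$ is known to be locally Lipschitz (or $C^{1,\alpha}$) uniformly in $x_0$; this is where the structure of the problem, rather than a soft argument, is used. If one wishes to avoid invoking that theory, the lower bound can instead be obtained by a Caffarelli-type comparison: from $w(x_0)=0$, $w\le Cr^2$ on $B_r$, and the fact that $w$ solves a uniformly elliptic equation with bounded right side on $N$, compare $w$ on $B_{r/2}(x_0)$ with the $\mathcal A$-harmonic function sharing its boundary values and apply the Harnack inequality to the resulting nonnegative difference to get $w\ge -Cr^2$ on $B_{r/4}(x_0)$. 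This comparison is in any case the mechanism through which the present lemma is upgraded to the $C^{1,1}$ estimate in the next step, since $w\le Cr^2$ together with $-K\le\Delta w$ forces $|w|\le Cr^2$ on a smaller ball and hence $|D^2u(x_0)|\le C$ after rescaling.
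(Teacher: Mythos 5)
Your lower bound is fine and is indeed the ``free'' direction coming from $Lu\le f$ (granting the interior Lipschitz estimate you cite to control the drift term), but your upper bound on the coincidence set has a genuine gap, and it is exactly the point the paper's argument is designed to get around. The step ``$u,\varphi_u\in W^{2,1}_{loc}$, hence $D^2u=D^2\varphi_u$ a.e.\ on $\{u=\varphi_u\}$'' is not available here: semiconcavity does \emph{not} give locally integrable second derivatives (the concave function $-|x_1|$ is semiconcave with modulus $0$ and its distributional Hessian has a nontrivial singular part, so it is not in $W^{2,1}_{loc}$), and, more seriously, $u\in W^{2,1}_{loc}$ (or a.e.\ twice differentiability of $u$) is not known at this stage --- a two-sided bound on $D^2u$ is essentially what the lemma is meant to produce, so invoking it is circular unless you import $W^{2,p}$ theory for obstacle problems with merely semiconcave, solution-dependent obstacles, which is precisely what the paper wants to avoid. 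There is a second, related defect: even granting $D^2u\le C\,\mathrm{Id}$ at a.e.\ point of the coincidence set and the equation on $\{u<\mathrm{M}u\}$, an ``a.e.\ dichotomy'' cannot exclude a positive singular part of the distributional Laplacian concentrated on the coincidence set or the free boundary (compare $|x_1|$: its pointwise second derivatives vanish a.e.\ while $\Delta|x_1|$ is a positive singular measure, and it satisfies $\Delta|x_1|\ge -K$). The one-sided information $\Delta u\ge -K$ rules out a negative singular part but not a positive one, so the upper estimate cannot be closed pointwise a.e.

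The paper closes it by working with $\Delta w$ as a measure and estimating its ball averages $\mu(B_\rho(x_0))/|B_\rho(x_0)|$ through the derivative of the spherical mean $\varPsi(\rho)$, using only $w(x_0)=0$, the quadratic one-sided bound $w\le C\rho^2$ supplied by the semiconcavity of $\mathrm{M}u$ at the contact point, and the distributional sign information coming from the variational inequality; two-sided bounds on all small-ball averages force the measure to be absolutely continuous with density in $[-K,C]$, which is what kills any possible singular part. Your closing remark --- comparing $w$ with its harmonic (or $\mathcal A$-harmonic) replacement on $B_{r/2}(x_0)$, exploiting $w(x_0)=0$ and $w\le Cr^2$ --- is in fact this mechanism and can be made to work; but as written you aim it at the lower bound, which you have already obtained from $Lu\le f$, whereas the direction that genuinely needs the semiconcavity and the contact point is the upper one. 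If you redirect that averaging/comparison argument at the upper bound and drop the a.e.\ Hessian identity, your proof becomes essentially the paper's.
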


Before proving this lemma we make a few observations. Fix $\varPhi \in C^{\infty}_{0}(B_{\frac{r}{2}}(x_{0}))$.  We recall the following fact from the theory of distributions: If u is a negative distribution in X with $u(\varPhi) \leq 0$ for all non-negative $\varPhi \in C^{\infty}_{0}(X)$, then u is a negative measure. In particular we have,
\begin{equation}
0 \geq \int_{B_{\frac{r}{2}}}\varPhi \; d\mu = \int_{B_{\frac{r}{2}}} \Delta u \; \varPhi.
\end{equation}
We consider $\forall \rho < \frac{r}{2}$,
\begin{equation}
\frac{\mu(B_{\rho}(x_{0}))}{|B_{\rho}(x_{0})|}  = \frac{1}{\alpha(n)\rho^{n}} \int_{B_{\rho}}d\mu =  \frac{1}{\alpha(n)\rho^{n}} \int_{B_{\rho}} \Delta u.
\end{equation}
A straightforward application of the Gauss-Green Formula gives to us the following identity,
\begin{equation}
\frac{1}{\alpha(n)\rho^{n}} \int_{B_{\rho}} \Delta w = \frac{n}{\rho}\frac{d}{d\rho}\varPsi(\rho).
\end{equation}
Where $\varPsi(\rho) = \frac{1}{n\alpha(n)\rho^{n-1}} \int_{\partial B_{\rho}} w$. Before proving the main lemma we will first prove the following claim.

\begin{claim} Let $w = u - L_{x_{0}}$ be defined as before. Then for some universal constant $K(n) > 0$,
\[
\frac{n}{\rho}\frac{d}{d\rho}\varPsi(\rho) \geq -K.
\]
\end{claim}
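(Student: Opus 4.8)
The plan is to convert the differential inequality satisfied by $u$ into an averaged lower bound on $\Delta w$ and then feed it through the Gauss--Green identity already recorded. Since $L_{x_{0}}$ is affine, $\Delta L_{x_{0}}\equiv 0$, so $\Delta w=\Delta u$ in the distributional sense on $\Omega$; combining this with the identity relating $\int_{B_{\rho}}\Delta w$ to $\tfrac{d}{d\rho}\varPsi(\rho)$, the claim becomes
\[
\frac{\mu(B_{\rho}(x_{0}))}{|B_{\rho}(x_{0})|}\ \ge\ -K \qquad\text{for every }\rho<\tfrac r2,
\]
where $\mu=\Delta u$ is the (negative) measure identified in the observations above. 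Equivalently, it suffices to show that $u+\tfrac{K}{2n}|x-x_{0}|^{2}$ is subharmonic in $B_{r/2}(x_{0})$, i.e.\ that $\Delta u\ge -K$ as a distribution there.

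First I would extract the inequality $Lu\le f$ from the quasi-variational inequality. For $\phi\in C_{0}^{\infty}(\Omega)$ with $\phi\ge 0$ the competitor $v=u-\varepsilon\phi$ satisfies $v\le u\le Mu$, hence is admissible in $a(u,v-u)\ge(f,v-u)$; dividing by $-\varepsilon<0$ yields $a(u,\phi)\le(f,\phi)$, that is $Lu\le f$ in $\mathcal D'(\Omega)$. Rearranging, $-\sum a_{ij}\partial_{ij}u\le f-\sum b_{i}\partial_{i}u-cu$. Now $c\ge c_{0}>0$ and $u\ge 0$ (both already established), so $-cu\le 0$; $f\in C^{\alpha}(\bar\Omega)$ is bounded above; and the first-order term is controlled by the a priori local Lipschitz bound on $u$ (available since $u$ solves $Lu=f$ and is interior $C^{2,\alpha}$ on $\{u<Mu\}$, while $u=\varphi_{u}\in C^{1,1}_{loc}$ on $\{u=Mu\}$). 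Hence $-\sum a_{ij}\partial_{ij}u\le K_{0}$ for a constant $K_{0}$ depending only on the data. For the model operator $L=-\Delta+c$ this already reads $-\Delta u\le K_{0}$, so $\Delta u+K_{0}$ is a nonnegative distribution, hence a nonnegative measure by the fact recalled above (a distribution of one sign is a measure of that sign), and therefore $\mu(B_{\rho})=\int_{B_{\rho}}\Delta u\ge-K_{0}|B_{\rho}|$; this gives the claim with $K=K_{0}$.

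For a general uniformly elliptic $L$ I would freeze the leading coefficients at $x_{0}$: $-\operatorname{tr}\!\big(A(x_{0})D^{2}u\big)\le K_{0}+\operatorname{tr}\!\big((A(x_{0})-A(x))D^{2}u\big)$. Because the whole argument lives in $B_{r/2}(x_{0})$ with $r$ small, $\|A(x_{0})-A(\cdot)\|_{L^{\infty}(B_{r/2})}\le Cr^{\alpha}$, and pairing against a nonnegative test function and using a crude a priori bound on the total variation of $D^{2}u$ on $B_{r/2}$ (which follows in turn from $\Delta u$ being bounded below together with the divergence theorem and the Lipschitz bound) lets one absorb the oscillation term into $K$. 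One then simply replaces $\varPsi$ and the spherical averages by their $A(x_{0})$-ellipsoidal analogues; the Gauss--Green identity and the rest of the bookkeeping are unchanged.

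The only real obstacle is this bookkeeping: securing the a priori local Lipschitz bound on $u$ that dominates the first-order term, and, for variable coefficients, controlling the coefficient-freezing error. For the model operator the claim is essentially immediate from $Lu\le f$, $cu\ge 0$ and the boundedness of $f$; the genuine difficulty in the main lemma (and hence in the $C^{1,1}$ theorem) is the complementary upper bound $\Delta w\le C$, where the semiconcavity of the obstacle and the negative-measure observation are actually used.
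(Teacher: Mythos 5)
Your core argument is correct, and it takes a genuinely different route from the paper. You reduce the claim, via the Gauss--Green identity already recorded and the fact that $L_{x_{0}}$ is affine, to the averaged bound $\frac{1}{\alpha(n)\rho^{n}}\int_{B_{\rho}}\Delta u \geq -K$, and you obtain this straight from the quasi-variational inequality: the competitor $v=u-\varepsilon\phi$, $\phi\geq 0$, is admissible because $v\leq u\leq \textnormal{M}u$, which yields $Lu\leq f$ in $\mathcal{D}'(\Omega)$, and for the model operator the zeroth-order term has a sign ($c\geq c_{0}>0$, $u\geq 0$), so $\Delta u\geq -\sup f$ as a distribution, hence as a measure, and the claim follows with $K=\|f\|_{L^{\infty}}$. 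The paper argues differently: it expands $\frac{n}{\rho}\frac{d}{d\rho}\varPsi(\rho)$ into three terms and bounds each one, using the semiconcavity estimate $w\leq C\rho^{2}$ on the spheres, the mean value inequality at the contact point (where $w(x_{0})=0$), and the monotonicity of spherical averages. Your proof is shorter, does not use the semiconcavity of $\textnormal{M}u$ at all for this direction (you correctly locate semiconcavity where it is genuinely needed, namely in the complementary bound $\Delta w\leq C$), and it avoids appealing to mean-value properties of $w$ of sub/superharmonic type, whose one-sided sign is essentially what the claim is asserting; in that sense your derivation of the lower bound from the PDE side is cleaner and more self-contained than the paper's.

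Two caveats on the parts of your sketch that go beyond the model case. First, the local Lipschitz bound you invoke to absorb the drift term is justified circularly: interior $C^{2,\alpha}$ estimates on $\{u<\textnormal{M}u\}$ are not a priori uniform up to the free boundary, and $C^{1,1}$ regularity of the obstacle on the contact set does not control $\nabla u$ in a neighborhood of it; at this stage of the paper one only knows $u\in C(\Omega)$ and that $\textnormal{M}u$ is semiconcave (hence locally Lipschitz), so a Lipschitz bound for $u$ would require a separate barrier or penalization argument --- though this is moot for the pure second-order model that the paper's mean-value machinery actually treats. Second, in the coefficient-freezing step you pair $A(x_{0})-A(x)$ against $D^{2}u$ and ask for a bound on the total variation of $D^{2}u$ on $B_{r/2}$; a one-sided bound on $\Delta u$ together with a Lipschitz bound makes $\Delta u$ a locally finite signed measure, but it does not make the full Hessian a measure, so that absorption as written does not go through. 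The paper sidesteps this by working with $\Delta$ throughout this section, and your argument is complete in exactly that setting.
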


\begin{proof}
We expand the derivative and compute.
\[
\begin{split}
\frac{n}{\rho}\frac{d}{d\rho}\varPsi(\rho)  & = \frac{n}{\rho} \frac{1-n}{n\alpha(n)\rho^{n}} \int_{\partial B_{\rho}(x_{0})} w(y) dS(y) + \frac{n}{n\alpha(n)\rho^{n}} \frac{d}{d\rho} \int_{\partial B_{\rho}(x_{0})} w(y) dS(y) \\
 & = \frac{n}{\rho} \frac{n-1}{n\alpha(n)\rho^{n}} \int_{\partial B_{\rho}(x_{0})} -w(y) dS(y) + \frac{1}{\alpha(n)\rho^{n}} \frac{d}{d\rho} \rho^{n-1} \int_{\partial B_{1}(0)} w(x_{0} + \rho z) dS(z) \\
& =  \frac{n}{\rho} \frac{n-1}{n\alpha(n)\rho^{n}} \int_{\partial B_{\rho}(x_{0})} -w(y) dS(y) + \frac{\rho^{n-2} (n-1)}{\alpha(n)\rho^{n}} \frac{\rho^{n-1}}{\rho^{n-1}} \int_{\partial B_{1}(0)} w(x_{0} + \rho z) dS(z) \\ 
& + \frac{\rho^{n-1}}{\alpha(n)\rho^{n}}\frac{d}{d\rho} \int_{\partial B_{1}(0)} w(x_{0} + \rho z) dS(z) \\
& = \frac{n}{\rho} \frac{n-1}{n\alpha(n)\rho^{n}} \int_{\partial B_{\rho}(x_{0})} -w(y) dS(y) +  \frac{(n-1)}{\alpha(n)\rho^{n+1}} \int_{\partial B_{\rho}(x_{0})} w(y) dS(y) \\
& + \frac{\rho^{n-1}}{\alpha(n)\rho^{n}}\frac{d}{d\rho} \int_{\partial B_{1}(0)} w(x_{0} + \rho z) dS(z)
\end{split}
\]
\\
Now we proceed to estimate each integral. By the modulus of semi-concavity on the ball we have,
$$  \frac{n}{\rho} \frac{n-1}{n\alpha(n)\rho^{n}} \int_{\partial B_{\rho}(x_{0})} -w(y) dS(y) \geq \frac{n (n-1)}{\alpha(n) n \rho^{n+1}} \; |\partial B_{\rho}(x_{0})| \; (-C\rho^{2}) = -C(n^{2} - n).$$
By the mean value theorem for subharmonic functions we have,
$$ \frac{(n-1)}{\alpha(n)\rho^{n+1}} \int_{\partial B_{\rho}(x_{0})} w(y) dS(y) \geq \frac{(n-1)}{\alpha(n)\rho^{n+1}} \; w(x_{0}) = 0.$$
By the nondecreasing property for the average integral we have:
$$ \frac{\rho^{n-1}}{\alpha(n)\rho^{n}}\frac{d}{d\rho} \int_{\partial B_{1}(0)} w(x_{0} + \rho z) dS(z) \geq 0.$$
Hence for $K = C(n^{2} - n)$ we obtain the desired estimate.
\end{proof}

\begin{proof} (Lemma 3)
From the claim we obtain the estimate,
$$\frac{1}{\alpha(n)\rho^{n}} \int_{B_{\rho}} \Delta w \geq -K.$$
Moreover from (23) and the semi-concavity estimate from above we know,
$$ C \geq \frac{\mu(B_{\rho}(x_{0}))}{|B_{\rho}(x_{0})|} \geq -K.$$

Letting $\rho \to 0$ we find $\forall x \in B_{\frac{r}{4}}(x_{0})$,
$$ C \geq \Delta u(x) \geq -K.$$

\end{proof}

We now state and prove the sharp estimate for the solution.

\begin{theorem} Let $u$ be a solution to the classical stochastic impulse control problem. Then,
\\
\begin{equation}
\|u\|_{C^{1,1}(B_{r/4})} \leq C
\end{equation}
\\
\end{theorem}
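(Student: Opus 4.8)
The plan is to upgrade the two-sided distributional bound $-K\le\Delta u\le C$ near the coincidence set $\{u=\varphi_u\}$, furnished by Lemma 3, first to a pointwise quadratic detachment estimate at each contact point, and then --- via a scaling argument together with interior Schauder estimates on the open set $\{u<\varphi_u\}$, where $Lu=f$ --- to a uniform bound on $D^2u$. Throughout I would fix a contact point $x_0\in\{u=\varphi_u\}$ and $p\in D^+\varphi_u(x_0)$, put $L_{x_0}(x)=\varphi_u(x_0)+\langle p,x-x_0\rangle$ and $w=u-L_{x_0}$, so that $w(x_0)=0$, $w(x)\le C|x-x_0|^2$ on $B_r(x_0)$ (by semi-concavity of the obstacle, Theorem 4, together with $u\le\varphi_u$), and $-K\le\Delta w\le C$ on $B_{r/4}(x_0)$ by Lemma 3.

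The first step would be the matching lower bound $w(x)\ge -C|x-x_0|^2$. For $\rho\le r/4$ I would decompose $w=g+h$ on $B_\rho(x_0)$, where $g$ is the Newtonian potential of $\Delta w\,\mathbf 1_{B_\rho(x_0)}$, so $\|g\|_{L^\infty}\le C\rho^2$ and $h=w-g$ is harmonic on $B_\rho(x_0)$. Then $h\le C\rho^2$ on $B_\rho(x_0)$ while $h(x_0)=-g(x_0)\ge -C\rho^2$, so applying the Harnack inequality to the nonnegative harmonic function $C\rho^2-h$ gives $h\ge -C'\rho^2$ on $B_{\rho/2}(x_0)$, hence $|w|\le C''\rho^2$ there. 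Unwinding the definitions, $\sup_{B_\rho(x_0)}|u-L_{x_0}|\le C\rho^2$ for all small $\rho$; in particular $u$ is differentiable at $x_0$ with $\nabla u(x_0)=p$, and since $\Delta u\in L^\infty$ on any interior ball (Lemma 3 near $\{u=\varphi_u\}$, and $Lu=f$ plus interior Schauder away from it) the Calder\'on--Zygmund and Sobolev embedding theorems give $u\in C^{1,\alpha}_{loc}$, so that $|p|$ is bounded by a universal constant.

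To reach $D^2u\in L^\infty$ I would split points $x$ of the ball according to $d(x)=\mathrm{dist}(x,\{u=\varphi_u\})$. When $d(x)$ exceeds a fixed multiple of $r$, $u$ solves $Lu=f$ on a fixed-size ball about $x$ and interior Schauder estimates give $|D^2u(x)|\le C$ directly. When $d(x)$ is small, take a nearest contact point $y$; then $Lu=f$ on $B_{d(x)/2}(x)$, and the rescaled function
\[
v(z)=d(x)^{-2}\bigl(u(x+d(x)z)-L_y(x+d(x)z)\bigr)
\]
solves a uniformly elliptic equation on $B_{1/2}$ with $C^\alpha$ coefficients and right-hand side bounded independently of $d(x)$, while $\|v\|_{L^\infty(B_{1/2})}\le C$ by the detachment estimate of the previous step (since $|x+d(x)z-y|\le 2d(x)$). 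Interior Schauder estimates for $v$ then give $\|D^2v\|_{L^\infty(B_{1/4})}\le C$, a scale-invariant bound, hence $|D^2u(x)|\le C$. Combining the two regimes yields $\|D^2u\|_{L^\infty}\le C$ on the ball; together with the a priori bound $\|u\|_{L^\infty(\Omega)}\le\|u_0\|_{C(\Omega)}$ and interpolation for $\nabla u$, this is exactly $\|u\|_{C^{1,1}(B_{r/4})}\le C$, and covering $\Omega$ by such balls gives $u\in C^{1,1}_{loc}(\Omega)$.

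The genuinely hard input is not in this theorem but in Lemma 3: because the nonlocal obstacle $\varphi_u=\textnormal{M}u$ is merely semi-concave and not $C^{1,1}$, one cannot subtract it off and fall back on the classical obstacle problem, and the role normally played by $\Delta\varphi\in L^\infty$ is instead played by the monotonicity of the spherical averages $\varPsi(\rho)$ of $w$. Granting Lemma 3, the only delicate bookkeeping here is making the scaling argument uniform across all distances $d(x)$ to the free boundary --- checking that the normalization by $d(x)^{-2}$ dictated by the detachment estimate produces an equation whose coefficients and data are bounded independently of $d(x)$, and that every constant depends only on $n$, the ellipticity constant, $\|a_{ij}\|_{C^\alpha}$, $\|b_i\|_{C^\alpha}$, $c_0$, $\|f\|_{C^\alpha}$, $\|u\|_{L^\infty}$ and the semi-concavity modulus of $\varphi_u$. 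The $C^{1,1}$ exponent is sharp because $u$ need not be $C^2$ across the free boundary.
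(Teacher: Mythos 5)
Your proposal is correct, but it takes a genuinely different route from the paper's. Both arguments rest on the same inputs — Lemma 3 (the two-sided bound $-K\le\Delta w\le C$ near a contact point) and the quadratic semi-concavity of $\textnormal{M}u$ — but they propagate this information differently. You first upgrade it to a two-sided pointwise quadratic detachment $|u-L_{x_0}|\le C\rho^{2}$ at contact points via the Newtonian-potential-plus-Harnack decomposition, and then carry the bound into the non-coincidence set $\{u<\textnormal{M}u\}$, where $Lu=f$, through the rescaling $v(z)=d^{-2}\bigl(u(x+dz)-L_{y}(x+dz)\bigr)$ and interior Schauder estimates: the classical ``quadratic growth plus rescaled interior estimates'' scheme for obstacle problems. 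The paper instead stays at the level of second incremental quotients: at contact points it identifies the paraboloid openings $\overline{\Theta}(u,B_\rho)(x_0)$ and $\underline{\Theta}(u,B_\rho)(x_0)$ with those of $\textnormal{M}u$, bounds them by the semi-concavity constant and by $K$ from Lemma 3, and then invokes the Caffarelli--Cabr\'e fact that $\Theta(u,\cdot)\in L^{\infty}$ implies $\|D^{2}u\|_{L^{\infty}}\le C$. What your version buys is explicitness precisely where the paper is tersest — the passage from contact points to arbitrary points of the ball, which you handle with an actual PDE estimate in $\{u<\textnormal{M}u\}$, making the dependence of constants and the role of the variable-coefficient operator transparent; what the paper's version buys is brevity, since the $\Theta$-machinery of \cite{CC95} replaces your scaling step entirely. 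Two bookkeeping points to tighten in your write-up: justify the uniform bound on $|p|$ (hence on the rescaled right-hand side involving $\sum_i b_i p_{y,i}$ and $cL_y$) by the distance dichotomy — Lemma 3 covers the full ball $B_{r/4}(x_0)$ around each contact point, so $\Delta u\in L^{\infty}$ locally and then $|\nabla u|\le C$ — rather than by ``interior Schauder away from the contact set'' alone, whose constant degenerates near the free boundary; and keep (or explicitly absorb) the scaled lower-order terms $d\,b_i\partial_i v+d^{2}cv$, noting they are bounded uniformly in $d$.
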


\begin{proof}
We recall some basic notions and definitions for convenience. For further details refer to (\cite{CC95}). We say that $P$ is a parabaloid of opening $M$ whenever,
$$P(x) = l_{0} + l(x) \pm \frac{M}{2} |x|^{2}.$$
We define,
$$\overline{\Theta}(u,A)(x_{0}),$$
to be the infimum of all positive constants $M$ for which there is a conex parabaloid of opening $M$ that touches $u$ from above at $x_{0}$ in $A$. Similarly one can define the infimum of all positive constants $M$ for which there is a convex parabaloid of opening $-M$ that touches $u$ from below at $x_{0}$ in $A$,
$$\underline{\Theta}(u,A)(x_{0}).$$
We further define,
$$\Theta(u,A)(x_{0}) = \sup \{\overline{\Theta}(u,A)(x_{0}), \underline{\Theta}(u,A)(x_{0}) \} \leq \infty.$$
As before we fix $x_{0} \in \{u = Mu \}$. We consider the second incremental quotients of $u$ and $Mu$,
\\
\[
\begin{split}
 \Delta_{h}^{2}u(x_{0}) &  = \frac{u(x_{0} + h) + u(x_{0} - h) - 2 u(x_{0})}{|h|^{2}}.\\
 \Delta_{h}^{2}Mu(x_{0}) &  = \frac{Mu(x_{0} + h) + Mu(x_{0} - h) - 2 Mu(x_{0})}{|h|^{2}}.
\end{split}
\]
\\
We make the following observations,
\\
1. $\Delta_{h}^{2}u(x_{0}) \leq \Delta_{h}^{2}Mu(x_{0}).$
\\
2. $0 \leq \overline{\Theta}(u,B_{\rho})(x_{0}) = \overline{\Theta}(Mu,B_{\rho})(x_{0}) \leq C$.
\\
3. $0 \leq \underline{\Theta}(u,B_{\rho})(x_{0}) = \underline{\Theta}(Mu,B_{\rho})(x_{0}) \leq K$.
\\
\\
Putting the estimates together we obtain,
$$-K \leq -\underline{\Theta}(u,B_{\rho})(x_{0}) \leq \Delta_{h}^{2}u(x_{0}) \leq \Delta_{h}^{2}Mu(x_{0}) \leq \overline{\Theta}(Mu,B_{\rho})(x_{0}) \leq C.$$
In particular $\forall x \in B_{\rho}$,
$$-K  \leq -\underline{\Theta}(u,B_{\rho})(x) \leq \Delta_{h}^{2}u(x) \leq \overline{\Theta}(u,B_{\rho})(x) \leq C.$$
This follows from choosing $\forall x \in B_{\rho}$, the lower parabaloid and upper parabaloid to be respectively,
$$P_{1}(y) = u(x) + \langle p_{1}, y-x \rangle - \frac{K}{2} |y|^{2}.$$
$$P_{2}(y) = u(x) + \langle p_{2}, y-x \rangle + \frac{C}{2} |y|^{2}.$$
Hence we obtain,
$$\Theta(u, \epsilon) = \Theta(u, B_{\rho} \cap B_{\epsilon}(x))(x) \in L^{\infty}(B_{\rho}).$$
This implies,
$$\|D^{2}u\|_{L^{\infty}(B_{\rho})} \leq C.$$
In particular we obtain our desired estimate,
$$\|u\|_{C^{1,1}(B_{\rho})} \leq C.$$
\end{proof}

\section{Regularity Estimates for the Free Boundary}

In this section we prove a structural theorem for the free boundary $\Gamma = \partial \{u < \textnormal{M}u \}.$
\begin{theorem}
Consider the classical stochastic impulse control problem
\begin{equation}
   \begin{cases}
         \Delta u(x) \geq f(x)& \forall x \in \Omega,\\
         u(x) \leq \textnormal{M}u(x) = 1 + \inf_{\stackrel{\xi \geq 0}{x + \xi \in \Omega}}u(x + \xi)  & \forall x \in \Omega,\\
         u = 0 & \forall x \in \partial \Omega.        
   \end{cases}
\end{equation} 

Moreover assume that $f$ is analytic and $f(x) \leq f(x + \xi) \; \; \forall \xi \geq 0$. Then it follows that, $\partial \{u < \textnormal{M}u \} = \Gamma^{r}(u) \cup \Gamma^{s}(u) \cup \Gamma^{d}(u)$ where,
\\
1. $\forall x_{0} \in \Gamma^{r}(u)$ there exists some appropriate system of coordinates in which the coincidence set $\{u = \textnormal{M}u \}$ is a subgraph $\{x_{n} \leq g(x_{1}, \dots, x_{n-1}) \}$ in a neighborhood of $x_{0}$ and the function $g$ is analytic.
\\
2. $\forall x_{0} \in \Gamma^{s}(u)$, $x_{0}$ is either isolated or locally contained in a $C^{1}$ submanifold.
\\
3. $\Gamma^{d}(u) \subset \Sigma(u)$ where $\Sigma(u)$ is a finite collection of $C^{\infty}$ submanifolds.
\end{theorem}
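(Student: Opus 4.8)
\emph{Proof proposal.} The plan is to partition the free boundary according to where the infimum defining $\textnormal{M}u$ is attained and to transfer the classical regularity theory of the obstacle problem across that decomposition. For $x_0 \in \Gamma$ choose $\xi_0 \geq 0$ with $x_0 + \xi_0 \in \bar\Omega$ realizing the infimum, so that $\textnormal{M}u(x_0) = 1 + u(x_0 + \xi_0)$; by Theorem 4 (the $C^{1,1}_{loc}$ estimate) this value and the corresponding translate are well-defined and regular. The three pieces $\Gamma^d$, $\Gamma^r$, $\Gamma^s$ will correspond to: (d) the infimum realized at a point $x_0 + \xi_0$ lying on $\partial\Omega$ or, more generally, with $\xi_0$ on the boundary of the positive cone (some coordinate of $\xi_0$ vanishing); and (r), (s) the infimum realized with $\xi_0$ in the open cone $\{\xi > 0\}$ (including $\xi_0 = 0$ itself treated as the degenerate case $x_0$ in the contact set directly). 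First I would record the dichotomy: since $u \in C^{1,1}$, at an interior minimizer $x_0 + \xi_0$ of $\xi \mapsto u(x_0+\xi)$ over the cone one has $\nabla u (x_0+\xi_0) = 0$ whenever $\xi_0$ is interior to the cone, which forces $\textnormal{M}u$ to be \emph{locally constant} near $x_0$ — indeed for $x$ near $x_0$, $x + \xi_0$ is still an admissible (and still critical, by the monotonicity hypothesis $f(x)\le f(x+\xi)$ combined with $\Delta u \ge f$) choice, so $\textnormal{M}u(x) \equiv 1 + u(x_0+\xi_0)$ on a neighborhood.

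On the constant part of the obstacle the problem reduces, after subtracting the constant, exactly to the classical obstacle problem $\Delta u = f$ in $\{u < c\}$, $u \le c$, with $f$ analytic and (from the reduction) $\Delta u = f > 0$ in the non-coincidence set near $x_0$ — the sign being inherited from $f(x) \ge f(x+\xi_0) = \Delta u(x_0+\xi_0) \ge$ (the value forced at the interior minimizer, which by the Euler--Lagrange/second-order condition is $\ge 0$ after the reduction, or more simply from $c_0 > 0$ and the normalization). I would then invoke the Caffarelli--Friedman / Caffarelli analyticity theorem for the free boundary of the classical obstacle problem with analytic right-hand side: near a regular free boundary point the coincidence set is an analytic subgraph (this gives $\Gamma^r$ and item 1), and the singular set $\Gamma^s$ — points of vanishing density of the complement, where the blow-up is a polynomial — is, by Caffarelli's stratification and the refinements one gets in the analytic setting, either a discrete set or locally contained in a $C^1$ (indeed $C^{1,\alpha}$) submanifold (item 2). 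The classification of free-boundary points into regular versus singular according to whether the blow-up is a half-space solution or a polynomial is where the analyticity and the strict sign of $f$ are used, and these are standard once the local reduction is in place.

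The remaining piece $\Gamma^d$ is the genuinely new case: here the realizing $\xi_0$ has some components on the boundary of the cone, so $\textnormal{M}u$ need \emph{not} be locally constant, but it inherits the regularity of $u$ on the lower-dimensional face — more precisely $\textnormal{M}u(x) = 1 + u(x + \xi_0)$ where now only the "free" coordinates of $x$ can be perturbed freely and the constrained coordinates see the translate. The idea is that the contact set $\{u = \textnormal{M}u\}$ near such $x_0$ is cut out by the analytic equation coming from the reduced (lower-dimensional-in-the-active-directions) obstacle problem: one uses the $C^{1,1}$ regularity (Theorem 4) to know $\textnormal{M}u \in C^{1,1}$ on the contact set, then bootstraps using that both $u$ and its translate solve analytic-coefficient elliptic equations, so that $w := u - \textnormal{M}u$ satisfies, on the relevant stratum, an analytic elliptic problem whose zero set is controlled by the analyticity of solutions to $\Delta u = f$ (interior analyticity of $u$ in the non-coincidence set, since $f$ is analytic). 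Covering $\bar\Omega$ by finitely many such coordinate systems indexed by the face of the cone that is active, and using compactness, one concludes $\Gamma^d \subset \Sigma(u)$ with $\Sigma(u)$ a finite union of $C^\infty$ (in fact analytic) submanifolds.

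\emph{Main obstacle.} The delicate step is the $\Gamma^d$ analysis: establishing that when the minimizing shift sits on the boundary of the positive cone the contact equation $u = \textnormal{M}u$ defines a smooth manifold rather than something wilder. One must rule out that, as $x_0$ varies along $\Gamma^d$, the active face of the cone and the identity of the minimizer $\xi_0$ jump in an uncontrolled way; the fix is to stratify $\Gamma^d$ by the active face (finitely many faces) and, on each stratum, use upper semicontinuity of the argmin set together with the $C^{1,1}$ bound on $u$ to get a locally constant (or locally analytically-varying) selection $\xi_0(x_0)$, after which interior analyticity of $u$ closes the argument. Making the finiteness of $\Sigma(u)$ genuinely finite (rather than locally finite) uses boundedness of $\bar\Omega$ and a covering argument, together with the analyticity preventing accumulation of distinct sheets.
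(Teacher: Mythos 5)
Your overall route is the same as the paper's: split $\Gamma$ according to whether the minimizing shift $\xi_{0}$ lies in the open cone or on its boundary, note that in the interior case $\textnormal{M}u$ is locally constant near $x_{0}$, reduce to a classical obstacle problem with analytic right-hand side and quote the regular/singular dichotomy for items 1 and 2, and confine the remaining points to finitely many smooth manifolds. The genuine gap is in the nondegeneracy step of the interior case. First, the signs are off: after subtracting the constant the relevant inequality is $f<0$ near $x_{0}$ (so that $\Delta(\textnormal{M}u-u)=-f>0$ in the non-coincidence set, which is what the normalized obstacle-problem theory needs), whereas you assert $\Delta u=f>0$ and justify it with ``$f(x)\ge f(x+\xi_{0})$'', which reverses the stated hypothesis $f(x)\le f(x+\xi)$; the appeal to ``$c_{0}>0$'' is also empty here, since the reduced problem in this section involves $\Delta$ with no zeroth-order term. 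Second, and more importantly, the soft chain $f(x_{0})\le\Delta u(x_{0})\le\Delta\textnormal{M}u(x_{0})=0$ only gives $f(x_{0})\le 0$; the whole difficulty is to exclude $f(x_{0})=0$, and your proposal contains no argument for this. The paper devotes a separate claim to it: if $f(x_{0})=0$, analyticity of $f$ provides an interior tangent ball contained in $\{f>0\}$ touching at $x_{0}$, and Hopf's boundary point lemma applied to $w=u-\textnormal{M}u$ (negative in that ball, with $\Delta w=f>0$ there) forces $\partial w/\partial\nu(x_{0})>0$, contradicting the fact that $w\in C^{1,1}$ attains its maximum $0$ at the interior point $x_{0}$, hence $\nabla w(x_{0})=0$. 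Without this strict sign the reduction to a ``normalized'' solution fails and the regular/singular classification you invoke simply does not apply at points where $f$ vanishes.

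On $\Gamma^{d}$, what you single out as the main obstacle is handled in the paper more directly than you anticipate: when $\xi_{0}$ sits on a face of the cone, the one-dimensional first-order condition in the active direction gives $u_{x_{i}}(x_{0}+\xi_{0})=0$, and analyticity is then used to place $\Gamma^{d}$ inside the finite collection $\Sigma(u)=\bigcup_{i}\{u_{x_{i}}=0\}\times\mathbb{R}^{n-1}$. Your proposed stratification by active face with an upper-semicontinuous selection of $\xi_{0}(x)$ is a plausible (arguably more careful) alternative, but as written it remains a plan: you never exhibit the analytic equation that cuts out each stratum, which in the paper is precisely this criticality condition in the constrained coordinate.
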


\begin{proof}

Recall $\Sigma_{x} =  \{1 + u(x + \xi) = \textnormal{M}u(x)\}$ and  $\Sigma_{\geq x} = \{x + \xi \; : \; \xi \geq 0\}.$ We define the following sets
\\ 
\\
1. $\Sigma_{\geq x}^{0} = \{\xi \in \Sigma_{\geq x} \; | \; \xi_{i} > 0 \; \forall i = 1, \dots , n\}.$
\\
2. $\partial_{i} \Sigma_{\geq x} = \{\xi \in \Sigma_{\geq x} \; | \; \xi_{i} > 0 \; \textnormal{and} \; \xi_{k} = 0 \; \; \forall k = 1, \dots , i-1, i +1, \dots, n \}.$
\\
3. $\Sigma_{x}^{0} = \{\xi \in \Sigma_{x} \; | \; \xi \in \Sigma_{\geq x}^{0} \}.$
\\
4. $\partial_{i} \Sigma_{x} = \{\xi \in \Sigma_{x} \; | \; \xi \in \partial_{i} \Sigma_{\geq x} \}.$
\\
\\
We note that 
$$\Sigma_{\geq x} = \Sigma_{\geq x}^{0} \cup (\bigcup_{i}^{n} \partial_{i} \Sigma_{\geq x}),$$
$$\Sigma_{x} = \Sigma_{x}^{0} \cup (\bigcup_{i}^{n} \partial_{i} \Sigma_{x}).$$

Fix $x_{0} \in \partial \{u < \textnormal{M}u \}$ and let $\xi_{0}$ be the positive vector such that, 
$$\inf_{\stackrel{\xi \geq 0}{x_{0} + \xi \in \Omega}}u(x_{0} + \xi) = 1 + u(x_{0} + \xi_{0}).$$ 
\\
\textbf{Case 1}: $\xi_{0} \in \Sigma_{x_{0}}^{0}.$ Then it follows from Claim 4 in \cite{J15B}, that $\forall x \in B_{\frac{\delta}{2}}(x_{0}),$ $\xi_{0} \in \Sigma_{x}^{0}.$ In particular for a fixed constant $C$, $\textnormal{M}u = C$ in $B_{\frac{\delta}{2}}(x_{0}).$ Without loss of generality we take $C = 0.$ Furthermore it follows that at a contact point $x_{0}$ we have the following chain of inequalities,
$$f(x_{0}) \leq \Delta u(x_{0}) \leq \Delta \textnormal{M}u(x_{0}) \leq f(x_{0} + \xi_{0}).$$

In particular,
$$f(x_{0}) \leq \Delta u(x_{0}) \leq 0.$$

We make the following claim,
\begin{claim} $f(x_{0}) < 0.$
\end{claim}

\begin{proof}
Suppose by contradiction that $f(x_{0}) = 0.$ By analyticity of $f$, it follows that $\Omega = \{f > 0 \}$ satisfies an interior sphere condition. Hence, $\forall z \in \partial \Omega$ there exists, $y \in \Omega$ and open ball  $B_{r}(y)$ such that $\overline{B_{r}(y)} \cap \overline{\Omega} = \{z\}.$ In particular consider $z =x_{0}$ and $y = y_{0}$. Observe that $\forall x \in \overline{B_{r}(y_{0})} \setminus \{x_{0}\}$, it follows that $w = u - \textnormal{M}u < 0$ and $\Delta w = \Delta u = f > 0$. Hence by the Hopf Boundary point lemma, 
$$\frac{\partial w}{\partial \nu}(x_{0}) > 0.$$
But $w \in C^{1,1}(x_{0}).$ A contradiction.
\end{proof}

From the claim it follows that in a small neighborhood $B_{\eta}(x_{0})$, we can study the following problem, 
\begin{equation}
   \begin{cases}
         \Delta w(x) = f(x) < 0& \forall x \in \{w < 0 \} \cap B_{\eta}(x_{0}),\\
         w(x) \leq 0  & \forall x \in B_{\eta}(x_{0}),\\
         w \in C^{1,1} & \forall x \in \overline{B_{\eta}(x_{0}) }       
   \end{cases}
\end{equation} 

Hence $w$ is a normalized solution and the conclusion follows for,

Finally to conclude we define,
$$\Gamma^{r}(u) = \{x \in \Gamma \; | \; \Sigma_{x}^{0} = \Sigma_{x} \; \textnormal{and} \; x \; \textnormal{is a \textbf{Regular Point}} \}.$$
$$\Gamma^{s}(u) = \{x \in \Gamma \; | \; \Sigma_{x}^{0} = \Sigma_{x} \; \textnormal{and} \; x \; \textnormal{is a \textbf{Singular Point}} \}.$$
\textbf{Case 2}: $\xi_{0} \in \partial_{i} \Sigma_{x_{0}}.$ We consider the set 
$$\Sigma(u) = \bigcup_{i}^{n} \{u_{x_{i}} = 0 \} \times \mathbb{R}^{n-1}.$$ 
By analyticity of $f$ it follows that $\{u_{x_{i}} = 0\}$ is a finite set $\forall i = 1,\dots,n.$ Hence $\Sigma(u)$ is a finite collection of hyperplanes $\{l_{j} \}_{j = 1}^{k} \subset \mathbb{R}^{n-1}.$ We define 
$$\Gamma^{d}(u) = \{x \in \Gamma(u) \; | \; \exists \bar{\xi} \in \partial_{i} \Sigma_{x} \}.$$
Finally to conclude we observe,
$$\Gamma^{d}(u) \subset \Sigma(u).$$  
\end{proof}


\begin{thebibliography}{*****}

\bibitem{B82} Bensoussan, A., \emph{Stochastic Control by Functional Analysis Methods}, Studies in Mathematics and its Applications Volume 11 (1982).

\bibitem{BL82}
Bensoussan, A. and Lions, J.-L., \emph{Applications of variational inequalities in stochastic control} 
Studies in Mathematics and its Applications, 1982. 

\bibitem{C98} 
Caffarelli, L., \emph{Obstacle Problem Revisited}, The Journal of Fourier Analysis, (4-5) 383-402 (1998).

\bibitem{CC95}
Caffarelli, L. and Cabr\'{e}, X., \emph{Fully nonlinear elliptic equations}, 
American Mathematical Society Colloquium Publications, 43. American Mathematical Society, 1995.

\bibitem{CF79a} 
Caffarelli L., Freidman, A., \emph{Regularity of the Solution of the Quasi-variational Inequality for the Impulse Control Problem},
Communications in Partial Differential Equations, 
3(8), 745-753 (1978). 

\bibitem{CF79b} 
Caffarelli, L., Freidman, A.,  \emph{Regularity of the Solution of the Quasi-variational Inequality for the Impulse Control Problem II},
Communications in Partial Differential Equations, 
4(3), 279-291 (1979).

\bibitem{CS04}
Cannarsa, P., Sinestrari, C., \emph{Semiconcave Functions, Hamilton-Jacobi Equations and Optimal Control},
Birkhauser, 2004.

\bibitem{E98} 
Evans, L.C., \emph{Partial Differential Equations}, American Mathematical Society, 1998.

\bibitem{F79}
Friedman, A., {Optimal stopping problems in stochastic control}, 
SIAM Rev. 21 (1979), no. 1, 71–80. 

\bibitem{HL97} 
Han, Q., Lin, F.,  \emph{Elliptic Partial Differential Equations},
Courant Institute of Mathematical Sciences (1997) 

\bibitem{I93}
Ishii, K., \emph{Viscosity solutions of nonlinear second order elliptic PDEs associated with impulse control problems}, Funkcial. Ekvac. 36 (1993), no. 1, 123–141. 

\bibitem{IY93}
Ishii, K. and Yamada, N., \emph{Viscosity solutions of nonlinear second order elliptic PDEs involving nonlocal operators}, Osaka J. Math. 30 (1993), no. 3, 439–455.

\bibitem{I95}
Ishii, K., \emph{Viscosity solutions of nonlinear second-order elliptic PDEs associated with impulse control problems. II}, Funkcial. Ekvac. 38 (1995), no. 2, 297–328. 

\bibitem{J15B}
Jain, R., \emph{The Fully Nonlinear Stochastic Impulse Control Problem}, Submitted. (arXiv Release Date: October 22, 2015).

\bibitem{L73}
Lions, J.-L., \emph{Free boundary problems and impulse control}, Fifth Conference on Optimization Techniques (Rome, 1973), Part I, pp. 116–123. Lecture Notes in Comput. Sci., Vol. 3, Springer, Berlin, 1973. 

\bibitem{M76}
Mosco, U., \emph{Implicit variational problems and quasi variational inequalities}, Nonlinear operators and the calculus of variations (Summer School, Univ. Libre Bruxelles, Brussels, 1975), pp. 83–156. Lecture Notes in Math., Vol. 543, Springer, Berlin, 1976. 

\end{thebibliography}
\end{document}